\theoremstyle{definition}
\newtheorem{De}{Definition}
\theoremstyle{theorem}
\newtheorem{Th}{Theorem}
\theoremstyle{lemma}
\newtheorem{Lm}{Lemma}
\theoremstyle{plain}
\theoremstyle{remark}
\theoremstyle{plain}
\newcommand{\ir}{[0,T]\times \mathbb{R}^{n}}
\begin{document}
\title{Inverse Stackelberg Solutions for  Games with Many Followers}
\author{Yurii Averboukh\footnote{Krasovskii Institute of Mathematics and Mechanics UrB RAS, S.~Kovalevskaya str. 16, Ekaterinburg, 620990, Russia, e-mail: ayv@imm.uran.ru, averboukh@gmail.com.}}
\date{}
\maketitle
\begin{abstract}
The paper is devoted to inverse Stackelberg games with many players. We consider both static and differential games. The main assumption of the paper is the compactness of the strategy sets. We obtain the characterization of inverse Stackelberg solutions and under additional concavity conditions establish the existence theorem.
\end{abstract}
{\small \textbf{Keywords:} {Inverse Stackelberg games; incentives; differential games.}
% \PACS{PACS code1 \and PACS code2 \and more}
% \subclass{MSC code1 \and MSC code2 \and more}

\vspace{6pt}\noindent\textbf{AMS 2010 Subject Classification.} 91A10, 91A06, 91A23, 49N70.}

\section{Introduction}

The paper is devoted to the inverse Stackelberg games, also known as incentive problems.
In the ordinary Stackelberg games one player (called a leader) announces his strategy when the other players (called followers) maximize their payoffs using this information. In the inverse Stackelberg games the leader announces the incentive strategy i.e. the reaction to the followers' strategies (see \cite{Ho_2}, \cite{Ho_1}, \cite{Ho_3}, \cite{olsder1}, \cite{olsder2} and reference therein).  For dynamic case the reaction should be nonanticipative.

The inverse Stackelberg games appear in several models (see for example \cite{allocation}, \cite{Stankova}). In the games with many followers it is often assumed that followers play a Nash game (see \cite{Basar}, \cite{allocation}, \cite{examplnashfoll}).  If the strategy sets are normed space then the incentive strategy can be constructed in the affine form (see \cite{Zheng_Basar_affine} for static games, and \cite{Ehtama_Hamalainen} for differential games).

In this paper we consider the static and differential games with many follower.
The main assumption in the paper is the compactness of the strategy sets. In this case the most efficient tool is discontinuous incentive strategies realizing the concept of punishment. \cite{Cleimenov} first applied punishment strategies to the feedback differential Stackelberg games. The inverse Stackelberg solutions of two-person differential games were studied via punishment strategies in the paper by \cite{averboukhbaklanov}. In that paper the authors described the set of inverse Stackelberg solutions  and showed its nonemptiness. In particular, the set of inverse Stackelberg payoffs is equal to the set of feedback Stackelberg payoffs. Note  that the incentive strategies considered in the paper by \cite{averboukhbaklanov} use full memory, i.e. the leader plays with the nonanticipating strategies proposed in the papers by \cite{elliotkalyon}, and \cite{VaraiyaLin} for zero-sum differential games. The usage of the strategies depending only on the current follower's control decreases the payoffs.

In this paper  punishment strategies are applied to the static inverse Stackelberg games and to the differential inverse Stackelberg games with many follower. We obtain the characterization of inverse Stackelberg solution and under additional concavity conditions establish the existence theorem.

The paper is organized as follows. Section \ref{sec_static} starts with the two-player static inverse Stackelberg game. Here there exists only one follower. We give the characterization of the solutions in this case and compare it with the ordinary Stackelberg solutions. Then we consider the static inverse Stackelberg game for the case of $n$ followers. The differential game case is considered in Section \ref{sec_diff}. In Section \ref{sec_ex_diff} we prove the existence theorem for the inverse Stackelberg solution of differential game.
\section{Static Games}\label{sec_static}
\subsection{Inverse Stackelberg Solutions for Two-player games}
We assume that the set of the players is $\{0,1\}$. Let $P_i$ be a set of strategies of player $i$; and let  $J_i(u_1,u_2)$ be an utility (payoff) function for player $i$. We assume that the sets $P_i$ are compact, and the functions $J_i$ are continuous. Each player wants to maximize his payoffs.

For definiteness let player $0$ be a leader, and let player $1$ be a follower.
In the inverse Stackelberg game the leader uses an incentive strategy $\alpha[u_1]$.  Here $\alpha[\cdot]$ is an arbitrary map from $P_1$ to $P_0$. The information about chosen incentive strategy of the leader is known to the follower.

Let $\alpha$ be a leader's incentive strategy. We say that $u_1^*$ is an optimal strategy of the follower if $$J_1(\alpha[u_1],u_1)\leq J_1(\alpha[u_1^*],u_1^*).$$ Denote the set of optimal  strategies of the follower by $\mathcal{F}(\alpha)$.

\begin{De}The pair consisting of incentive strategy of the leader $\alpha^*$ and the strategy of the follower $u_1^*$ is said to be an inverse Stackelberg solution if
\begin{enumerate}
  \item $u_1^*\in\mathcal{F}(\alpha^*)$;
  \item for any incentive strategy of the leader $\alpha$ the following inequality holds $$J_0(\alpha^*[u_1^*],u_1^*)\geq\max\{J_0(\alpha[u_1],u_1):u_1\in \mathcal{F}(\alpha)\}. $$
\end{enumerate}
\end{De} The second conditions in particular means that we consider the team solution.

The inverse Stackelberg solution  can be described by means of the lower value of the auxiliary zero-sum game in which  player $1$ wishes to maximize his payoff
\begin{equation}\label{V_def}
V^-=\max_{u_1\in P_1}\min_{u_0\in P_0}J_1(u_0,u_1).
\end{equation}

Let $\mathcal{A}$ be a set of pairs of strategies $(u_0,u_1)$ such that $J_1(u_0,u_1)\geq V^-$.

\begin{Lm}\label{lm_optimal}
If $u_1^\natural\in\mathcal{F}(\alpha)$, then $(\alpha[u_1^\natural],u_1^\natural)\in\mathcal{A}$.
\end{Lm}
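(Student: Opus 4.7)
The goal is to show that the follower's optimal payoff against $\alpha$ is at least the lower value $V^-$. The plan is to combine two elementary bounds: the optimality of $u_1^\natural$ against $\alpha$, and the fact that plugging any specific strategy pair into $J_1$ dominates the pointwise infimum in the zeroth argument.

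First I would unpack the definition of $\mathcal{F}(\alpha)$: since $u_1^\natural \in \mathcal{F}(\alpha)$, we have
\begin{equation*}
J_1(\alpha[u_1^\natural], u_1^\natural) \geq J_1(\alpha[u_1], u_1) \quad \text{for all } u_1 \in P_1.
\end{equation*}
Next I would observe that for every fixed $u_1 \in P_1$, since $\alpha[u_1] \in P_0$, we have $J_1(\alpha[u_1], u_1) \geq \min_{u_0 \in P_0} J_1(u_0, u_1)$. Chaining these two inequalities gives
\begin{equation*}
J_1(\alpha[u_1^\natural], u_1^\natural) \geq \min_{u_0 \in P_0} J_1(u_0, u_1) \quad \text{for every } u_1 \in P_1.
\end{equation*}

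Taking the supremum over $u_1 \in P_1$ on the right, which is attained by compactness of $P_1$ and continuity of $J_1$ (via a standard argument using the lower semicontinuity of the min, or directly since we only need an inequality), yields
\begin{equation*}
J_1(\alpha[u_1^\natural], u_1^\natural) \geq \max_{u_1 \in P_1} \min_{u_0 \in P_0} J_1(u_0, u_1) = V^-.
\end{equation*}
By the definition of $\mathcal{A}$, this is exactly $(\alpha[u_1^\natural], u_1^\natural) \in \mathcal{A}$, concluding the proof.

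There is essentially no obstacle here; the only point that merits a sentence is ensuring $V^-$ is well-defined as a maximum (not just a supremum), but this follows from the standing compactness and continuity hypotheses on $P_0, P_1, J_1$. The argument never uses any structure of $\alpha$ beyond the fact that it takes values in $P_0$, which explains why the conclusion holds for \emph{every} incentive strategy, as needed for the subsequent characterization of inverse Stackelberg solutions through $V^-$.
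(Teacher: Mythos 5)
Your proof is correct and follows essentially the same route as the paper: the paper instantiates the optimality of $u_1^\natural$ at a maximizer $\hat{u}_1$ of $u_1\mapsto\min_{u_0\in P_0}J_1(u_0,u_1)$ and uses $J_1(\alpha[\hat{u}_1],\hat{u}_1)\geq V^-$, which is just your chained inequality evaluated at that particular $u_1$. No substantive difference.
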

\begin{proof}
Let $\hat{u}_1$ maximize the right-hand side of (\ref{V_def}). We have that for all $u_0\in P_0$
$J_1(u_0,\hat{u}_1)\geq V^- $. Therefore, $$J_1(\alpha[u_1^\natural],u_1^\natural)\geq J_1(\alpha[\hat{u}_1],u_1)\geq V^-. $$
\end{proof}
The converse statement is also true.
\begin{Lm}\label{lm_constr} Let $(u_0^\natural,u_1^\natural)\in\mathcal{A}$, then there exists an incentive strategy of the leader $\alpha$ such that
$u_0^\natural=\alpha[u_1^\natural]$ and $u_1^\natural\in\mathcal{F}[\alpha]$.
\end{Lm}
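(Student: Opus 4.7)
The plan is to build $\alpha$ as a punishment-type incentive strategy: prescribe the desired response at $u_1^\natural$, and at every other follower's action punish the follower by choosing the leader's control that makes the follower's payoff as small as possible. This forces the follower's deviation payoff down to at most $V^-$, while at $u_1^\natural$ the payoff is at least $V^-$ by the hypothesis $(u_0^\natural,u_1^\natural)\in\mathcal{A}$.

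Concretely, I would proceed as follows. First, for each $u_1\in P_1$ with $u_1\neq u_1^\natural$, use compactness of $P_0$ and continuity of $J_1$ to choose some $p(u_1)\in P_0$ attaining $\min_{u_0\in P_0} J_1(u_0,u_1)$; since the definition of an incentive strategy permits arbitrary maps $P_1\to P_0$, no measurability considerations are needed. Then define
\begin{equation*}
\alpha[u_1]=\begin{cases} u_0^\natural, & u_1=u_1^\natural,\\ p(u_1), & u_1\neq u_1^\natural.\end{cases}
\end{equation*}
By construction $\alpha[u_1^\natural]=u_0^\natural$.

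Next I would verify the optimality condition $u_1^\natural\in\mathcal{F}(\alpha)$. For any $u_1\neq u_1^\natural$,
\begin{equation*}
J_1(\alpha[u_1],u_1)=\min_{u_0\in P_0} J_1(u_0,u_1)\leq \max_{u_1'\in P_1}\min_{u_0\in P_0} J_1(u_0,u_1')=V^-,
\end{equation*}
while by the hypothesis on $(u_0^\natural,u_1^\natural)$,
\begin{equation*}
J_1(\alpha[u_1^\natural],u_1^\natural)=J_1(u_0^\natural,u_1^\natural)\geq V^-.
\end{equation*}
Combining these two inequalities gives $J_1(\alpha[u_1],u_1)\leq J_1(\alpha[u_1^\natural],u_1^\natural)$ for every $u_1\in P_1$, which is exactly the condition $u_1^\natural\in\mathcal{F}(\alpha)$.

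There is no real obstacle here; the only delicate point is ensuring that the pointwise minimizer $p(u_1)$ exists, which is immediate from compactness of $P_0$ and continuity of $J_1(\cdot,u_1)$, and that the punishment threshold $V^-$ used to bound the follower's deviation payoff is precisely the quantity against which membership in $\mathcal{A}$ is tested. The proof is short and mirrors the argument of Lemma \ref{lm_optimal} in reverse.
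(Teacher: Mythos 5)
Your proof is correct and is essentially the paper's own argument: the same punishment construction with a pointwise minimizer of $J_1(\cdot,u_1)$ off the prescribed point $u_1^\natural$, and the same two inequalities $J_1(\alpha[u_1],u_1)\leq V^-\leq J_1(u_0^\natural,u_1^\natural)$ establishing $u_1^\natural\in\mathcal{F}(\alpha)$.
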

\begin{proof} For $u_1\in P_1$ let $\beta[u_1]\in{\rm Argmin}\{J_1(u_0,u_1):u_0\in P_0\}$.
Put $$\alpha[u_1]=\left\{
\begin{array}{cc}
  u_0^\natural, & u_1=u_1^\natural \\
  \beta[u_1], & u_1\neq u_1^\natural
\end{array}
\right.
$$
If $u_1\neq u_1^\natural$, then $$J_1(\alpha[u_1],u_1)=J_1(\beta(u_1),u_1)\leq V^-\leq J_1(u_0^\natural,u_1^\natural)=J_2(\alpha[u_1^\natural],u_1^\natural). $$
Therefore, $u_1^\natural\in\mathcal{F}(\alpha)$.
\end{proof}
The definition of inverse Stackelberg solution and lemmas 1, 2 yield the following Theorem.
\begin{Th}\label{th_inverse_static}
\begin{enumerate}
  \item If $(\alpha^*,u_1^*)$ is an inverse Stackelberg solution, then the pair $(u_0^*,u_1^*)$  with $u_0^*=\alpha^*[u_1^*]$ belongs to the set $\mathcal{A}$  and
      \begin{equation}\label{u_1_max_A}
      (u_0^*,u_1^*)\in {\rm Argmax}\{J_0(u_0,u_1):(u_0,u_1)\in\mathcal{A}\}.
      \end{equation}
  \item If the pair $(u_0^*,u_1^*)\in\mathcal{A}$ satisfies  condition (\ref{u_1_max_A}), then there exists an incentive strategy of the leader $\alpha^*$ such that $u_0^*=\alpha^*[u_1^*]$ and $(\alpha^*,u_1^*)$ is an incentive Stackelberg solution.
  \item There exists at least one inverse Stackelberg solution.
\end{enumerate}
\end{Th}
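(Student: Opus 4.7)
The plan is to use Lemmas \ref{lm_optimal} and \ref{lm_constr} to reduce each of the three assertions to an elementary statement about the auxiliary set $\mathcal{A}$ and the continuous payoff $J_0$ on the compact product $P_0\times P_1$.

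For part 1, suppose $(\alpha^*,u_1^*)$ is an inverse Stackelberg solution and set $u_0^*=\alpha^*[u_1^*]$. Lemma \ref{lm_optimal} immediately gives $(u_0^*,u_1^*)\in\mathcal{A}$. To verify (\ref{u_1_max_A}) I would argue by contradiction: if some $(\tilde u_0,\tilde u_1)\in\mathcal{A}$ satisfied $J_0(\tilde u_0,\tilde u_1)>J_0(u_0^*,u_1^*)$, then Lemma \ref{lm_constr} would produce an incentive strategy $\tilde\alpha$ with $\tilde u_0=\tilde\alpha[\tilde u_1]$ and $\tilde u_1\in\mathcal{F}(\tilde\alpha)$, so $\max\{J_0(\tilde\alpha[u_1],u_1):u_1\in\mathcal{F}(\tilde\alpha)\}\ge J_0(\tilde u_0,\tilde u_1)>J_0(\alpha^*[u_1^*],u_1^*)$, contradicting condition 2 of the definition.

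For part 2, apply Lemma \ref{lm_constr} to $(u_0^*,u_1^*)$ to obtain $\alpha^*$ with $u_0^*=\alpha^*[u_1^*]$ and $u_1^*\in\mathcal{F}(\alpha^*)$. It remains to check condition 2 of the definition. Fix an arbitrary incentive strategy $\alpha$ and any $u_1\in\mathcal{F}(\alpha)$; Lemma \ref{lm_optimal} gives $(\alpha[u_1],u_1)\in\mathcal{A}$, so (\ref{u_1_max_A}) yields $J_0(\alpha[u_1],u_1)\le J_0(u_0^*,u_1^*)=J_0(\alpha^*[u_1^*],u_1^*)$, and taking the maximum over $u_1\in\mathcal{F}(\alpha)$ finishes the argument.

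For part 3, by part 2 it suffices to exhibit a maximizer of $J_0$ on $\mathcal{A}$. The set $\mathcal{A}=\{(u_0,u_1)\in P_0\times P_1:J_1(u_0,u_1)\ge V^-\}$ is closed in the compact set $P_0\times P_1$ by continuity of $J_1$, hence compact. It is nonempty: by compactness of $P_1$ and continuity of $u_1\mapsto\min_{u_0}J_1(u_0,u_1)$ (which follows from the theorem of the maximum since $P_0$ is compact and $J_1$ is continuous), the maximum in (\ref{V_def}) is attained at some $\hat u_1$, and then every pair $(u_0,\hat u_1)$ lies in $\mathcal{A}$. Since $J_0$ is continuous, it attains its maximum on $\mathcal{A}$, and part 2 converts this maximizer into an inverse Stackelberg solution.

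The only nontrivial point is the existence argument in part 3, where one must be careful to invoke continuity of the marginal function $u_1\mapsto\min_{u_0\in P_0}J_1(u_0,u_1)$ in order to guarantee that $V^-$ is actually attained; the rest of the proof is a mechanical combination of the two lemmas with the definition.
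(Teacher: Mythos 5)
Your proof is correct and follows essentially the same route as the paper: parts 1 and 2 are obtained by combining Lemmas \ref{lm_optimal} and \ref{lm_constr} with the definition of the inverse Stackelberg solution, and part 3 follows from part 2 together with the compactness of $\mathcal{A}$. Your explicit verification that $\mathcal{A}$ is nonempty (via attainment of the maximum in (\ref{V_def})) is a detail the paper leaves implicit, but it is the same argument.
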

\begin{proof}
The first two statements directly follow from the definition of inverse Stackelberg solution and lemmas 1, 2.

The third statement follows from the second one and the compactness of $\mathcal{A}$.
\end{proof}

Now let us compare the payoffs given by inverse and ordinary Stackelberg solutions.
Recall the definition of the Stackelberg solution. Let $F(u_0)$ be a set of strategies $u_1^\natural$ such that $u_1^\natural$ maximizes the function $u_1\mapsto J_1(u_0,u_1)$. The pair $(u_0^\dag,u_1^\dag)$ is said to be a Stackelberg solution if
\begin{itemize}
  \item $u_1^\dag\in F(u_0^\dag)$;
  \item $J_0(u_0^\dag,u_1^\dag)=\max\{J_0(u_0,u_2):u_0\in F(u_0)\}$.
\end{itemize}
Note that if $(u_0^\dag,u_1^\dag)$ is the Stackelberg solution then
$$J_1(u_0^\dag,u_1^\dag)\geq V^+=\min_{u_0\in P_0}\max_{u_1\in P_1}J_1(u_0,u_1). $$ Here $V^+$ is the upper value of the auxiliary zero-sum game; $V^-\leq V^+$.
If $(u_0^\dag,u_1^\dag)$ is a Stackelberg solution, and $(\alpha^*,u_1^*)$ is an inverse Stackelberg solution, then
\begin{equation}\label{J_1_stack_inverse}
J_0(u_0^\dag,u_1^\dag)\leq J_0(\alpha^*[u_1^*],u_1^*).
\end{equation}
Indeed, denote $u_0^*=\alpha^*[u_1^*]$. By Theorem \ref{th_inverse_static} we have that $(u_0^*,u_1^*)$ maximizes the value of $J_0$ over the set $\mathcal{A}$. The pair $(u_0^\dag,u_1^\dag)$ maximizes the value of $J_0$ over the set $\{(u_0,u_1):u_1\in F(u_0)\}$. Inequality (\ref{J_1_stack_inverse}) follows from this and the inclusion $$\{(u_0,u_1):u_1\in F(u_0)\}\subset\{(u_0,u_1)\geq V^+\}\subset \mathcal{A}.$$

The following example shows that the inequality in (\ref{J_1_stack_inverse}) can be strick even in the case when $V^-=V^+$.
Let $J_0=u_0-u_1$, $J_1=u_0+u_1$, $u_0,u_1\in [-1,1]$. We have that $V^-=V^+=0$. The Stackelberg solution is the pair $(1,1)$; $J_1(1,1)=0$, $J_2(1,1)=2$.

Note that the pair $(1,-1)$ maximizes the value of $J_0$ over the set $\{(u_0,u_1):u_0-u_1\geq 0\}$. The inverse Stackelberg solution is the pair $(\alpha^*,-1)$ with $$\alpha^*_0[u_1]=\left\{
\begin{array}{cc}
  1, & u_1=-1, \\
  -1, & u_1\neq -1.
\end{array}
\right. $$
Consequently, we have that in this example the inverse Stackelberg solution gives a larger payoff than the Stackelberg solution
$J_0(\alpha^*_0[-1],-1)=2>0=J_0(1,1)$.
\subsection{Case of One Leader and Many Followers}
Let  player $0$ be a leader, and let players $1,\ldots n$ be followers. Player $i$ has a set of strategies $P_i$ and a payoff function $J_i$. As above, we assume that the sets $P_i$ are compact, the functions $J_i$ are continuous.

The incentive strategy of the leader is a mapping $$\alpha:\times_{i=1}^nP_i\rightarrow P_0.$$ To define the inverse Stackelberg game we should specify the solution concept used by  followers. We suppose that the followers play Nash game. Let $$P=\times_{i=1}^n P_i. $$ An element $u=(u_1,\ldots,u_n)$ of $P$ is a profile of followers' strategies. If $u_i'\in P_i$ then $(u_i',u_{-i})$ is the profile of strategies $(u_1,\ldots,u_{i-1},u_i',u_{i+1},\ldots,u_n)$. For simplification we write $J_i(u_0,u)$ to denote $J_i(u_0,u_1,\ldots,u_n)$. Analogously, put $J_i(u_0,u_i',u_{-i})\triangleq J_i(u_0,(u_i',u_{-i}))$. If $\alpha$ is an incentive strategy of the leader, $u$ is a profile of strategies of the followers, then denote $J_i[\alpha,u]=J_i(\alpha[u],u)$,  $J_i[\alpha,u_i',u_{-i}]=J_i[\alpha,(u_i',u_{-i})]$.
Further, let $\mathcal{E}(\alpha)$ be a set of followers' Nash equilibria in the case when the leader play with the incentive strategy $\alpha$:
$$\mathcal{E}(\alpha)=\{u:J_i[\alpha,u]\leq J_i[\alpha,u_i',u_{-i}], \ \ i=\overline{1,n},\ \ u_i'\in P_i\}. $$
\begin{De}
The pair $(\alpha^*,u^*)$ is an inverse Stackelberg solution in the game with one leader and $n$ followers playing Nash equilibrium if
\begin{enumerate}
  \item $u^*\in\mathcal{E}(\alpha)$.
  \item $$J_0[\alpha^*,u^*]=\max_{\alpha}\max_{u\in\mathcal{E}(\alpha)}J_0[\alpha,u]. $$
\end{enumerate}
\end{De}
The structure of inverse Stackelberg solution is given in the following statements. Denote
$$\mathcal{B}=\left\{(u_0^\natural,u^\natural):J_i(u_0^\natural,u^\natural)\geq \max_{u_i}\min_{u_0}J_i(u_0,u_i,u^\natural_{-i}), \ \ i=\overline{1,n}\right\}. $$
\begin{Lm}\label{lm_const_opt_many}
\begin{enumerate}
  \item If $u^\natural\in\mathcal{E}(\alpha)$, then $(\alpha[u^\natural],u^\natural)\in\mathcal{B}$;
  \item If the strategy of the leader $u_0^\natural$, and the profile of the followers' strategies $u^\natural$ are so that $(u^\natural,u^\natural)\in\mathcal{B}$, then there exists an incentive strategy of the leader $\alpha$ such that $u^\natural\in\mathcal{E}(\alpha)$
\end{enumerate}
\end{Lm}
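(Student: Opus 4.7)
The plan is to carry over the strategy of Lemmas \ref{lm_optimal} and \ref{lm_constr} coordinate by coordinate, using that the Nash equilibrium requirement is just a family of one-player optimality conditions, one per follower with the other coordinates frozen at $u^\natural_{-i}$.

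For the first assertion, I would fix an index $i$ and a test strategy $u_i' \in P_i$, and start from the equilibrium inequality
$$J_i(\alpha[u^\natural],u^\natural) \geq J_i(\alpha[u_i', u_{-i}^\natural], u_i', u_{-i}^\natural).$$
Since $\alpha[u_i', u_{-i}^\natural]\in P_0$, the right-hand side is bounded below by $\min_{u_0} J_i(u_0, u_i', u_{-i}^\natural)$. Maximizing over $u_i'$ delivers exactly the defining inequality of $\mathcal{B}$ for index $i$; repeating over $i$ gives $(\alpha[u^\natural], u^\natural) \in \mathcal{B}$.

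For the second assertion, I would first choose, for every $i$ and every $u_i \in P_i$, some $\beta_i[u_i] \in \mathrm{Argmin}\{J_i(u_0, u_i, u_{-i}^\natural) : u_0 \in P_0\}$, which exists by continuity of $J_i$ and compactness of $P_0$. Then I would define $\alpha$ piecewise: $\alpha[u^\natural] := u_0^\natural$; on profiles of the form $(u_i, u_{-i}^\natural)$ with $u_i \neq u_i^\natural$ set $\alpha := \beta_i[u_i]$; and on the remaining profiles set $\alpha$ to any fixed element of $P_0$. For any player $i$ and any deviation $u_i' \neq u_i^\natural$,
$$J_i[\alpha, u_i', u_{-i}^\natural] = \min_{u_0} J_i(u_0, u_i', u_{-i}^\natural) \leq \max_{u_i} \min_{u_0} J_i(u_0, u_i, u_{-i}^\natural) \leq J_i(u_0^\natural, u^\natural) = J_i[\alpha, u^\natural],$$
where the final inequality is the hypothesis $(u_0^\natural, u^\natural) \in \mathcal{B}$. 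Hence $u^\natural \in \mathcal{E}(\alpha)$.

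The only step that requires care is the well-definedness of the piecewise recipe for $\alpha$, which is immediate because the cases ``$u = u^\natural$'' and ``$u$ differs from $u^\natural$ at exactly one coordinate $i$'' are mutually disjoint (the index $i$ is determined by the unique differing coordinate), and multi-coordinate profiles never enter the unilateral deviation check. So the real content is simply that the compact-continuous setting allows an individual best punishment $\beta_i$ for each follower, and these punishments never conflict with one another; no continuity or measurability of $\alpha$ is required, just as in the two-player case.
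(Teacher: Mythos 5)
Your proof is correct and is exactly the argument the paper intends: the paper omits the proof of this lemma, stating only that it is ``the same as the proofs of Lemmas \ref{lm_optimal} and \ref{lm_constr}'', and your coordinate-wise adaptation (minimizing over $u_0$ in the equilibrium inequality for part 1, and the punishment construction $\beta_i$ against unilateral deviations with $\alpha[u^\natural]=u_0^\natural$ for part 2) is precisely that adaptation, including the useful observation that only unilateral deviations matter so the piecewise definition cannot conflict.
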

The proof of this Lemma is the same as the proofs of Lemmas \ref{lm_optimal} and \ref{lm_constr}.

\begin{Th}
\begin{enumerate}
  \item If $(\alpha^*,u^*)$ is an inverse Stackelberg solution, then the profile of strategies $(u_0^*,u^*_1)$ with $u_0^*=\alpha^*(u^*_1)$ maximizes the value $J_0(u^*_0,u^*_1)$ over the set $\mathcal{B}$.
  \item If profile of strategies $(u_0^*,u^*_1)$ maximizes the value $J_0(u^*_0,u^*_1)$ over the set $\mathcal{B}$ then there exists an incentive strategy $\alpha^*$ such that $\alpha^*[u_1^*]=u_0^*$ and $(\alpha^*,u^*_1)$ is an inverse Stackelberg solution.
  \item If the function $u_i'\mapsto J_i(u_0,u_i',u_{-i})$ is quasiconcave for all $u_0$, $u_{-i}$, and $i=1,\ldots,n$, then there exists at least one inverse Stackelberg solutions.
\end{enumerate}
\end{Th}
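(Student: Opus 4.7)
Parts (1) and (2) should follow almost immediately from the definition of inverse Stackelberg solution together with Lemma~\ref{lm_const_opt_many}: the set $\mathcal{B}$ is precisely the image of all Nash-equilibrium configurations under all possible incentive strategies of the leader. For (1), if $(\alpha^*,u^*)$ is an inverse Stackelberg solution, Lemma~\ref{lm_const_opt_many}(1) gives $(\alpha^*[u^*],u^*)\in\mathcal{B}$, and for any $(u_0^\natural,u^\natural)\in\mathcal{B}$ Lemma~\ref{lm_const_opt_many}(2) produces some $\alpha$ with $u^\natural\in\mathcal{E}(\alpha)$ and $\alpha[u^\natural]=u_0^\natural$, so $J_0(u_0^\natural,u^\natural)=J_0[\alpha,u^\natural]\le J_0[\alpha^*,u^*]$ by optimality of $(\alpha^*,u^*)$. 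For (2), Lemma~\ref{lm_const_opt_many}(2) constructs the required $\alpha^*$, and the same inclusion-argument shows that no other pair $(\alpha,u)$ with $u\in\mathcal{E}(\alpha)$ can beat $(\alpha^*,u^*)$.

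For (3), by part (2) it is enough to show that $\mathcal{B}$ is nonempty and compact; then $J_0$ attains its maximum there. Set
\[
v_i(u_{-i}):=\max_{u_i\in P_i}\min_{u_0\in P_0}J_i(u_0,u_i,u_{-i}).
\]
Compactness of $P_0$ together with continuity of $J_i$ makes $(u_i,u_{-i})\mapsto\min_{u_0}J_i(u_0,u_i,u_{-i})$ jointly continuous; Berge's maximum theorem then yields continuity of $v_i$. Consequently $\mathcal{B}$ is the intersection of closed sets $\{(u_0,u):J_i(u_0,u)\ge v_i(u_{-i})\}$ in the compact product $P_0\times P$, hence compact.

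Nonemptiness is the heart of the argument and is where the quasiconcavity hypothesis is used. The function $u_i\mapsto\min_{u_0}J_i(u_0,u_i,u_{-i})$ is quasiconcave, since its upper level sets are intersections over $u_0$ of the convex upper level sets of the quasiconcave maps $u_i\mapsto J_i(u_0,u_i,u_{-i})$. The best-reply correspondence
\[
B_i(u_{-i}):={\rm Argmax}_{u_i\in P_i}\min_{u_0\in P_0}J_i(u_0,u_i,u_{-i})
\]
is therefore nonempty, compact and convex valued, and upper hemicontinuous by Berge. Kakutani's fixed point theorem applied to the product correspondence $u\mapsto B_1(u_{-1})\times\cdots\times B_n(u_{-n})$ on $P$ delivers a profile $u^*$ with $\min_{u_0}J_i(u_0,u_i^*,u_{-i}^*)=v_i(u_{-i}^*)$ for all $i$. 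Then for every $u_0\in P_0$ one has $J_i(u_0,u^*)\ge v_i(u_{-i}^*)$, so $(u_0,u^*)\in\mathcal{B}$ and $\mathcal{B}\neq\emptyset$.

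The main obstacle is making sure the fixed-point step is legitimate: Kakutani and quasiconcavity tacitly require each $P_i$ to be a convex subset of a topological vector space, so the blanket ``compact strategy set'' hypothesis must be read with this additional linear structure in mind. Once that is granted, the chain ``Kakutani fixed point $\Rightarrow$ $\mathcal{B}\ne\emptyset$ $\Rightarrow$ $\mathcal{B}$ compact, $J_0$ attains a maximum on $\mathcal{B}$ $\Rightarrow$ part~(2) yields the incentive strategy $\alpha^*$'' finishes the proof.
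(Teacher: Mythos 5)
Your proposal is correct and follows essentially the same route as the paper: parts (1) and (2) are read off from Lemma~\ref{lm_const_opt_many}, and part (3) reduces to the existence of a Nash equilibrium of the auxiliary game with payoffs $K_i(u)=\min_{u_0\in P_0}J_i(u_0,u)$, after which every pair $(u_0,u^\natural)$ lies in $\mathcal{B}$ and compactness of $\mathcal{B}$ lets $J_0$ attain its maximum. The only difference is that you spell out the Berge/Kakutani argument (and the tacit convexity of the sets $P_i$) that the paper compresses into the single assertion that a quasiconcave-payoff Nash equilibrium of the auxiliary game exists.
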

\begin{proof}
The proof of the first two statements directly follows from Lemma \ref{lm_const_opt_many}.

Let us prove the third statement of the Theorem.
Define  $$K_i(u_1,\ldots,u_n)=\min_{u_0\in P_0}J_i(u_0,u_1,\ldots,u_i).$$ The functions $u_i'\mapsto K_i(u_i',u_{-i})$ are quasiconcave for all $u_{-i}$. Therefore there exists a profile of followers' strategies $u^\natural$ such that for all $u_i\in P_i$
$K_i(u^\natural)\geq K_i(u_i,u_{-i}^\natural) $. Hence, we have that any pair $(u_0,u^\natural)$ belongs to $\mathcal{B}$. Consequently, $\mathcal{B}$ is nonempty. Moreover, the set $\mathcal{B}$ is compact. This prove the existence of the pair $(u_0^*,u^*)$ maximizing $J_0$ over the set $\mathcal{B}$. The existence of inverse Stackelberg solution directly follows from the second statement of the Theorem.
\end{proof}
\section{Inverse Stackelberg Solution for Differential Games}\label{sec_diff}
As above we assume that player $0$ is a leader, when players $1,\ldots, n$ are followers.
The dynamics of the system is given by the equation
\begin{equation}\label{system}
  \dot{x}=f(t,x,u_0,u_1,\ldots,u_n), \ \ t\in [0,T], \ \ x\in\mathbb{R}^d, \ \ x(0)=x_0, \ \ u_i\in P_i.
\end{equation}
Player $i$ wishes to maximize the payoff
$$ \sigma_i(x(T))+\int_{0}^Tg_i(t,x,u_0,u_1,\ldots,u_n)dt.$$

The set
$$\mathcal{U}_i=\{u_i:[0,T]\rightarrow P_i\mbox{ measurable}\} $$ is the set of open-loop strategies of player $i$. As above the $n$-tuple of open-loop strategies of followers $u=(u_1,\ldots,u_n)$ is called the profile of strategies. For notational simplicity denote $$f(t,x,u_0,u)=f(t,x,u_0,u_1,\ldots,u_n),\ \  g(t,x,u_0,u)=g(t,x,u_0,u_1,\ldots,u_n). $$ Further, put $$\mathcal{U}=\times_{i=1}^n\mathcal{U}_i.$$ If $u_0\in\mathcal{U}_0$, $u=(u_1,\ldots,u_n)\in \mathcal{U}$, $(t_*,x_*)\in\ir$, then denote by $x(\cdot,t_*,x_*,u_0,u)$ the solution of initial value problem
$$\dot{x}(t)=f(t,x(t),u_0(t),u_1(t),\ldots, u_n(t)), \ \ x(t_*)=x_*. $$ Put
$$z_i(t,t_*,x_*,u_0,u)=\int_{t_*}^tg_i(t,x(t),u_0(t),u_1(t),\ldots,u_n(t))dt. $$ If $t_*=0$, $x_*=x_0$ we omit the arguments $t_*$ and $x_*$. Let $z(\cdot,t_*,x_*,u_0,u)=(z_0(\cdot,t_*,x_*,u_0,u),z_1(\cdot,t_*,x_*,u_0,u),\ldots, z_n(\cdot,t_*,x_*,u_0,u))$. We assume that the set of motions is closed  i.e. for all $(t_*,x_*)\in\ir$
\begin{multline*}
{\rm cl}\{(x(\cdot,t_*,x_*,u_0,u),z(\cdot,t_*,x_*,u_0,u)):u_0\in\mathcal{U}_0,u\in\mathcal{U}\}\\= \{(x(\cdot,t_*,x_*,u_0,u),z(\cdot,t_*,x_*,u_0,u)):u_0\in\mathcal{U}_0,u\in\mathcal{U}\}.
\end{multline*} Here ${\rm cl}$ denote closure in space of continuous functions on $[0,T]$.

We assume that the followers use the open-loop strategies $u_i\in\mathcal{U}_i$, when the leader's strategy is a nonanticipative strategy $\alpha:\mathcal{U}\rightarrow \mathcal{U}_0$. The nonanticipation property means that
$\alpha[u](\tau)=\alpha[u'](\tau) $ for any $u$ and $u'$ coinciding on $[0,\tau]$.

For $u_0\in\mathcal{U}_0$, $u\in\mathcal{U}$, $(t_*,x_*)$ define $$J_i(t_*,x_*,u_0,u)=\sigma_i(x(T,t_*,x_*,u_0,u))+z_i(T,t_*,x_*,u_0,u). $$ Further, put
$$J_i[t_*,x_*,\alpha,u]=J_i(t_*,x_*,\alpha(u),u). $$
We omit the arguments $t_*$ and $x_*$ if $t_*=0$, $x_*=x_0$.

We assume that the followers' solution concept is Nash equilibrium. Let $\mathcal{E}_d(\alpha)$ denote the set of Nash equilibria in the case when the leader plays with nonanticipating strategy $\alpha$:
$$\mathcal{E}_d(\alpha)=\{u\in\mathcal{U}:J_i[\alpha,u]\geq J_i[\alpha,u'_i,u_{-i}]\mbox{ for all }u_i'\in\mathcal{U}_i\}. $$

\begin{De} The pair consisting of nonanticipative strategy of the leader $\alpha^*$ and $u^*\in\mathcal{U}$ is an inverse Stckelberg solution of the differential game if
\begin{itemize}
  \item $u^*\in\mathcal{E}_d(\alpha^*)$
  \item $$J_0[\alpha^*,u^*]=\max_{\alpha}\max_{u\in\mathcal{E}_d(\alpha)} J_0[\alpha,u]. $$
\end{itemize}
\end{De} The proposed definition is analogous to the definition of inverse Stackelberg solution for static games.
The characterization in the differential game case is close to the characterization in the static game case also.

For a fixed profile of strategies of all players but $i$-th one $u_{-i}$ one can consider the zero-sum differential game of  player $0$ and player $i$. The lower value of this game is
$$V_i^-(t_*,x_*,u_{-i})=\min_{\alpha}\max_{u_i'\in\mathcal{U}_i}J_i[t_*,x_*,\alpha,u_i',u_{-i}]. $$

Let \begin{multline*}\mathcal{C}=\{(u_0,u)\in\mathcal{U}_0\times\mathcal{U}:\\J_i(t,x(t),u_0,u)\geq V_i^-(t,x(t),u_{-i}), \ \ x(\cdot)=x(\cdot,u_0,u),\ \ t\in [0,T]\}. \end{multline*}

\begin{Lm}\label{lm_ness} Let $\alpha$ be an incentive strategy of the leader.
If $u^\natural\in\mathcal{E}_d(\alpha)$ then $(\alpha[u^\natural],u^\natural)\in\mathcal{C}$.
\end{Lm}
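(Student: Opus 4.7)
The plan is to argue by contradiction, in the spirit of Lemma \ref{lm_optimal}: if at some time $t$ follower $i$'s continuation payoff under the equilibrium falls strictly below the guaranteed value $V_i^-$, then $i$ has a tail deviation on $[t,T]$ that strictly improves his total payoff, contradicting $u^\natural\in\mathcal{E}_d(\alpha)$.

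Set $u_0^\natural:=\alpha[u^\natural]$ and $x^\natural(\cdot):=x(\cdot,u_0^\natural,u^\natural)$, and suppose for contradiction that
$$J_i(t,x^\natural(t),u_0^\natural,u^\natural)<V_i^-(t,x^\natural(t),u^\natural_{-i})$$
for some index $i$ and some $t\in[0,T]$. The first step is to extract from $\alpha$ a nonanticipative tail strategy $\alpha^t$ of the leader on $[t,T]$ by prefixing with the equilibrium history and restricting the output: for every tail profile $v$ on $[t,T]$, set $\alpha^t[v]$ to be the restriction of $\alpha[u^\natural|_{[0,t]}\ast v]$ to $[t,T]$, where $\ast$ denotes concatenation at time $t$. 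Nonanticipation of $\alpha$ makes $\alpha^t$ itself nonanticipative on $[t,T]$, so by the definition of $V_i^-$ as a $\min_\alpha\max_{u_i'}$ one has
$$\max_{v_i'} J_i\bigl[t,x^\natural(t),\alpha^t,v_i',u^\natural_{-i}|_{[t,T]}\bigr]\geq V_i^-(t,x^\natural(t),u^\natural_{-i}),$$
and I would pick a tail control $\bar v_i$ attaining this supremum to within arbitrary $\varepsilon>0$.

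Next, I would concatenate: $\bar u_i:=u^\natural_i|_{[0,t]}\ast\bar v_i$. By nonanticipation, $\alpha[\bar u_i,u^\natural_{-i}]$ agrees with $u_0^\natural$ on $[0,t]$, so both the trajectory and the running cost $z_i$ accumulated on $[0,t]$ are unchanged under this deviation. Splitting the payoff additively at $t$ then yields
$$J_i[\alpha,\bar u_i,u^\natural_{-i}]-J_i[\alpha,u^\natural]=J_i\bigl[t,x^\natural(t),\alpha^t,\bar v_i,u^\natural_{-i}|_{[t,T]}\bigr]-J_i(t,x^\natural(t),u_0^\natural,u^\natural),$$
which, for $\varepsilon$ smaller than the strict gap in the contradiction hypothesis, is strictly positive --- contradicting the Nash property of $u^\natural$.

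The step I expect to be most delicate is the dynamic-programming-style consistency check that the tail strategy $\alpha^t$ really belongs to the class over which the infimum in $V_i^-(t,x^\natural(t),\cdot)$ is taken, i.e.\ that the lower value of the tail subgame started at $(t,x^\natural(t))$ coincides with $V_i^-$ as defined. Once this is granted the rest is bookkeeping, with the closedness assumption on the set of motions playing the role that compactness of $P_i$ played in the static proof, ensuring the sup over tail responses can indeed be approached to within arbitrary~$\varepsilon$.
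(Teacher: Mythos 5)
Your proposal is correct and follows essentially the same route as the paper's proof: both hinge on restricting $\alpha$ after the equilibrium prefix to obtain a nonanticipative tail strategy at $(t,x^\natural(t))$, concatenating a tail deviation with $u_i^\natural|_{[0,t]}$, using nonanticipation and additivity of the payoff to localize the comparison to $[t,T]$, and then invoking $\max_{u_i'}\geq V_i^-$ to contradict the Nash property. Your $\varepsilon$-optimal tail response and the explicit remark about $\alpha^t$ lying in the admissible class are just slightly more careful bookkeeping of steps the paper performs implicitly.
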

\begin{proof}
We claim that
\begin{equation}\label{J_i_J_i_prim}
J_i[t,x^\natural(t),u_0^\natural,u^\natural]\geq J_i[t,x^\natural(t),\alpha[u_i',u_{-i}^\natural],u_i',u_{-i}^\natural]
\end{equation}
for any $u_i'\in\mathcal{U}_i$, $u_0^\natural=\alpha(u^*)$, $x^\natural=x(\cdot,\alpha[u^*],u^\natural)$. Assume the converse. This means that for some $u_i'$ and $\tau$
$$J_i[\tau,x^\natural(\tau),u_0^\natural,u^\natural]< J_i[\tau,x^\natural(\tau),\alpha[u_i',u_{-i}^\natural],u_i',u_{-i}^\natural]. $$
Consider the control $$u_i^\flat=\left\{
\begin{array}{cc}
  u_i^\natural(t), & t\in [0,\tau] \\
  u_i'(t), & t\in [\tau,T].
\end{array}
\right. $$
Denote $u_0^\flat=\alpha[u_i^\flat,u_{-i}^\natural]$, $x^\flat(\cdot)=x(\cdot,u_0^\flat,(u_i^\flat,u_{-i}^\natural))$. We have that $$J_i[\alpha,u_i^\flat,u_{-i}^\natural]\\=\sigma(x^\flat(T))+\int_{0}^T g(t,x^\flat(t),u_0^\flat,(u_i^\flat,u_{-i}^\natural))dt.
$$
Since  for $t\in [0,\tau]$ $u^\flat_i(t)=u^\natural_i(t)$, $u_0^\flat=u_0^\natural(t)=\alpha[u^\natural](t)$, $x^\flat(t)=x^\natural(t)$, and for $t\in [\tau,T]$ $x^\flat(t)=x(t,\tau,x^\natural(\tau),u_0^\flat,(u_i^\flat,u_{-i}^\natural))$ the following inequality holds
$$J_i[\alpha,u_i^\flat,u_{-i}^\natural]> \int_{0}^\tau g_i(t,x^\natural(t),u_0^\natural,u^\natural)dt+J[\tau,x^\natural(\tau),\alpha,u^\natural]= J[\alpha,u^\natural]. $$ This contradicts with the assumption $u^\natural\in\mathcal{E}_d(\alpha)$.

The inequality (\ref{J_i_J_i_prim}) yields the inequality $J_i[t,x^\natural(t),u_0^\natural,u^\natural]\geq V_i^-(t,x^\natural(t),u^\natural_{-i})$.
\end{proof}
\begin{Lm}\label{lm_suff_diff}
For any $(u_0^\natural,u^\natural)\in\mathcal{C}$ there exists a nonanticipative strategy of the leader $\alpha$ so that $\alpha(u^\natural)=u^\natural_0$ and $u^\natural\in\mathcal{E}_d(\alpha)$.
\end{Lm}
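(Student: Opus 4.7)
The plan is to mimic the punishment construction of Lemma \ref{lm_constr}, adapted to the nonanticipative open-loop setting. The leader will play the nominal control $u_0^\natural$ as long as the followers adhere to $u^\natural$, and the very instant some follower deviates, he will switch to a nonanticipative punishment strategy realizing the lower value $V_i^-$ of the zero-sum subgame against that deviating follower.

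First I would associate with any profile $u\in\mathcal{U}$ its first deviation time
$$\theta[u]=\sup\{t\in[0,T]:u(s)=u^\natural(s)\text{ for a.e.\ }s\in[0,t]\}$$
and the first deviating index $i[u]\in\{1,\dots,n\}$ (with ties broken by smallest index). For each $i$ and each triple $(t_*,x_*,u_{-i})$ I would fix a nonanticipative strategy $\gamma_i^{t_*,x_*,u_{-i}}$ attaining the minimum in
$$V_i^-(t_*,x_*,u_{-i})=\min_{\alpha'}\max_{u_i'\in\mathcal{U}_i}J_i[t_*,x_*,\alpha',u_i',u_{-i}];$$
the closedness hypothesis on the set of motions is precisely what guarantees existence of such a minimizer. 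I would then declare $\alpha[u]$ to equal $u_0^\natural$ on $[0,\theta[u]]$ and, on $(\theta[u],T]$, to be the punishment $\gamma_{i[u]}^{\theta[u],x(\theta[u]),u_{-i[u]}}$ applied to the actual deviating control $u_{i[u]}$. Nonanticipativity of $\alpha$ follows because two profiles coinciding on $[0,\tau]$ have the same deviation time, the same deviating index, and (by nonanticipativity of each $\gamma_i$) the same continuation up to $\tau$.

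The Nash verification is the heart of the argument, and this is exactly where the assumption $(u_0^\natural,u^\natural)\in\mathcal{C}$ is invoked. If follower $i$ replaces $u_i^\natural$ by some $u_i'$ while the remaining followers keep $u_{-i}^\natural$, the trajectory agrees with $x^\natural$ up to the deviation time $\theta$; after $\theta$ the prescribed punishment strategy caps follower $i$'s continuation payoff by $V_i^-(\theta,x^\natural(\theta),u_{-i}^\natural)$. The defining inequality of $\mathcal{C}$ then chains to
$$J_i[\alpha,u_i',u_{-i}^\natural]\le\int_0^\theta g_i(t,x^\natural(t),u_0^\natural,u^\natural)\,dt+V_i^-(\theta,x^\natural(\theta),u_{-i}^\natural)\le J_i(u_0^\natural,u^\natural)=J_i[\alpha,u^\natural],$$
which is the Nash inequality for follower $i$.

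The main obstacle I anticipate is technical rather than conceptual: making $\theta[u]$ well-defined in spite of the a.e.\ ambiguity of measurable controls, handling the case $\theta[u]=T$ (no real deviation) separately, and splicing $u_0^\natural$ with a punishment strategy at a follower-dependent switching time in a way that both respects nonanticipativity and keeps the concatenated motion inside the closed set of admissible trajectories. Once these regularity issues are settled, the construction transcribes the static proof of Lemma \ref{lm_constr} step for step.
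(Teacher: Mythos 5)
Your construction is essentially the paper's own proof: the leader plays $u_0^\natural$ until the first time follower $i$'s control departs from $u_i^\natural$, then switches to a nonanticipative strategy attaining the minimum in $V_i^-$ from the current position, and the Nash inequality follows by splitting the deviator's payoff at the switching time and invoking the defining inequality of $\mathcal{C}$ along the nominal trajectory. Your version is in fact slightly more careful than the paper's (which defines $\alpha^*$ only on unilateral deviations and writes the final estimate without the integral term up to the deviation time), but the route is the same.
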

\begin{proof}
Let $u_i\in \mathcal{U}$, and let $\tau_i$ be the greatest time  so that $u_i=u_i^\natural$ on $[0,\tau_i]$. Denote $\xi_i=x(t,u_0^\natural,(u_i,u_{-i}^\natural))$. There exists  a nonanticipative strategy of the leader $\alpha_{\tau_i}$ such that
$$V_i(\tau_i,\xi_i,u_{-i}^\natural)=\max_{u_i}J[\tau_i,\xi_i,\alpha_{\tau_i},u_i,u_{-i}^\natural]. $$
Let $\alpha^*$ be a nonanticipative strategy of the leader so that
$$\alpha^*[u_i,u_{-i}^\natural](t)=\left\{
\begin{array}{cc}
  u_0^\natural(t), & t\in [0,\tau] \\
  \alpha_{\tau_i}(u_i), & t\in[\tau,T].
\end{array}
\right. $$
We have that $\alpha^*[u^\natural]=u_0^\natural$. Moreover, for any $u_i\in\mathcal{U}_i$
$J_i[\alpha,u_i,u_{-i}^\natural]=V_i(\tau_i,\xi_i,u_{-i}^\natural)\leq J_i[\alpha,u^\natural]$.
\end{proof}
\begin{Th}\label{th_diff_stack}
\begin{enumerate}
\item If the pair $(\alpha^*,u^*)$ is an inverse Stackelberg solution then $(u_0^*,u^*)\in\mathcal{C}$ and $(u_0^*,u^*_1)$ maximizes the value $J_0$ over the set $\mathcal{C}$ for $u_0^*=\alpha^*[u^*]$.
\item Conversely, if the pair $(u_0^*,u^*_1)$ maximizes the value $J_0$ over the set $\mathcal{C}$ then there exists an incentive strategy of the leader $\alpha^*$ such that $\alpha^*[u_1^*]=u_0^*$ and $(\alpha^*,u_1^*)$ is an incentive Stackelberg solution.
\end{enumerate}

\end{Th}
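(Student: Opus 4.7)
The plan is to assemble the theorem directly from the two preceding lemmas, in exact parallel with the proofs of Theorems~1 and~2 in the static case. No new machinery is needed; the content is really a bookkeeping argument that reconciles the definition of the inverse Stackelberg solution with the geometric/value-theoretic description of $\mathcal{C}$.

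For the first statement, suppose $(\alpha^*,u^*)$ is an inverse Stackelberg solution, and set $u_0^*=\alpha^*[u^*]$. Since $u^*\in\mathcal{E}_d(\alpha^*)$ by the definition, Lemma~\ref{lm_ness} immediately gives $(u_0^*,u^*)\in\mathcal{C}$. To check the maximization property, I would take an arbitrary $(u_0^\natural,u^\natural)\in\mathcal{C}$, invoke Lemma~\ref{lm_suff_diff} to produce a nonanticipative $\alpha$ with $\alpha[u^\natural]=u_0^\natural$ and $u^\natural\in\mathcal{E}_d(\alpha)$, and then note
$$J_0(u_0^\natural,u^\natural)=J_0[\alpha,u^\natural]\le\max_{\alpha'}\max_{u\in\mathcal{E}_d(\alpha')}J_0[\alpha',u]=J_0[\alpha^*,u^*]=J_0(u_0^*,u^*),$$
which is exactly what has to be shown.

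For the second statement, suppose $(u_0^*,u^*)$ maximizes $J_0$ over $\mathcal{C}$. Lemma~\ref{lm_suff_diff} applied to this pair yields a nonanticipative $\alpha^*$ with $\alpha^*[u^*]=u_0^*$ and $u^*\in\mathcal{E}_d(\alpha^*)$, so the first clause of the definition of inverse Stackelberg solution is satisfied. To verify the second clause, I would take any other nonanticipative $\alpha$ and any $u\in\mathcal{E}_d(\alpha)$; Lemma~\ref{lm_ness} places $(\alpha[u],u)$ in $\mathcal{C}$, and the assumed maximization property gives
$$J_0[\alpha,u]=J_0(\alpha[u],u)\le J_0(u_0^*,u^*)=J_0[\alpha^*,u^*].$$
Taking the supremum over $\alpha$ and $u\in\mathcal{E}_d(\alpha)$ delivers the required equality.

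There is no real obstacle to expect here since Lemmas~\ref{lm_ness} and~\ref{lm_suff_diff} already carry all the dynamical content (the nonanticipative punishment construction and the optimality of the associated zero-sum game). The only small point to be careful about is that the inequalities be oriented consistently with the direction of $\mathcal{E}_d$ as stated (maximization of followers' payoffs) and with the convention that $J_0$ and the individual $J_i$ are both to be maximized, so that the value function $V_i^-$ defining $\mathcal{C}$ matches the inner minimum the leader can impose on follower $i$.
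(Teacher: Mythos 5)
Your proof is correct and follows exactly the route the paper intends: the paper simply states that the theorem ``directly follows from Lemmas \ref{lm_ness}, \ref{lm_suff_diff}'', and your argument spells out precisely that deduction. Nothing is missing.
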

The  Theorem directly follows from Lemmas \ref{lm_ness}, \ref{lm_suff_diff}.
\section{Existence of Inverse Stackelberg Solution for Differential Game}\label{sec_ex_diff}
In this section we consider the differential game in the mixed strategies. This means that we replace the system (\ref{system}) with the control system described by the equation
\begin{equation}\label{system_mixed}
\dot{x}(t)=\int_{P_0}\int_{P_1}\ldots\int_{P_n}f(t,x(t),u_0,u_1,\ldots,u_n)\mu_n(t,du_n)\ldots\mu_1(t,du_1)\mu_0(t,du_0).
\end{equation} Here $\mu_i(t,\cdot)$ are probabilistic measures on $P_i$.
We denote the solution of initial value problem for equation (\ref{system_mixed}) and the position $(t_*,x_*)$ by $x(\cdot,t_*,x_*,\mu_0,\mu_1,\ldots,\mu_n)$. Further, let $\mathcal{M}_i$ be a set of function $\mu_i(t,du_i)$ such that for all $t$ $\mu_i(t,\cdot)$ is a probabilistic measure on $P_i$ and $t\mapsto \mu(t,\cdot)$ is weakly measurable i.e.
$$t\mapsto\int_{P_i}\phi(u_i)\mu(t,du_i) $$ is measurable for any continuous function $\varphi$.

As above we call the $n$-tuple $\mu=(\mu_1,\ldots,\mu_n)$ the profile of followers' mixed strategies. Denote the set of followers' strategies by $\mathcal{M}$. Put $x(\cdot,t_*,x_*,\mu_0,\mu)=x(\cdot,t_*,x_*,\mu_0,\mu_1,\ldots,\mu_n)$, $x(\cdot,t_*,x_*,\mu_0,\mu_i',\mu_{-i})=x(\cdot,t_*,x_*,\mu_0,(\mu_i',\mu_{-i}))$.

Further denote $$P_{-i}=\times_{j\neq i}P_j.$$ If $m=(m_1,\ldots,m_n)\in\mathcal{M}$ then denote with a slight abuse of notation $m(du)=m_1(du_1)\ldots m_n{du_n}$. Further, $$\int_P\varphi(u)m(du) $$ means the integral by the measure
$m_1(du_1)\ldots m_n(du_n)$ over the set $P=P_1\times \ldots\times P_n$.
Analogously, if $m_{-i}$ is a $(n-1)$-tuple of measures $(m_j)_{j\neq i}$ then we assume that $m_{-i}(du_{-i})\triangleq\times_{j\neq i}m_j(du_j)$. Thus,
$$\int_{P_{-i}}\varphi(u_{-i})m_{-i}(du_{-i}) $$ designates the integral by the measure $\times_{j\neq i}m_j(du_j)$ over the set $P_{-i}$.

For the given position $(t_*,x_*)\in\ir$, and measures $\mu_0\in\mathcal{M}_0$, $\mu\in\mathcal{M}$ the corresponding payoff of player $i$ is equal to
\begin{multline*}J_i(t_*,x_*,\mu_0,\mu)=
\sigma_i(x(T,t_*,x_*,\mu_0,\mu))\\+\int_{t_*}^T\int_{P_0}\int_{P} g_i(t,x(t,t_*,x_*,\mu_0,\mu),u_0,u)\mu_0(t,du_0)\mu(t,du)dt.
\end{multline*}

As above the  mapping $\alpha:\mathcal{M}\rightarrow\mathcal{M}_0$ satisfying  condition of feasibility
(the equality $\mu'$ and $\mu''$ on $[0,\tau]$ yields the equality $\alpha[\mu'](t,\cdot)=\alpha[\mu''](t,\cdot)$ on $[0,\tau]$)  is called nonanticipative strategy.

\begin{Th}\label{Th_existence} Assume that the following conditions hold true for each $i=\overline{1,n}$
\begin{enumerate}
  \item $x\mapsto\sigma_i(x)$ is concave;
  \item $g_i(t,x,u_0,u)=g_i^0(t,x,u_{-i})+g^1_i(t,u_0,u_{-i})+g^2_i(t,u)$ and the function ${x\mapsto g_i^0(t,x,u_{-i})}$ is concave.
\end{enumerate}

Then there exists an inverse Stackelberg solution in mixed strategies $(\alpha^*,\mu^*)$.
\end{Th}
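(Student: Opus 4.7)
I would mirror the static existence argument from Theorem~2, adapted to mixed strategies. First, Lemmas~\ref{lm_ness} and \ref{lm_suff_diff} carry over verbatim when one replaces $\mathcal{U}_i$ by $\mathcal{M}_i$ and $V_i^-$ by its mixed-strategy analog $V_i^-(t_*,x_*,\mu_{-i})=\min_{\alpha}\max_{\mu_i'}J_i[t_*,x_*,\alpha,\mu_i',\mu_{-i}]$. Consequently the existence of an inverse Stackelberg solution reduces to showing that the set
$$\mathcal{C}_m=\{(\mu_0,\mu)\in\mathcal{M}_0\times\mathcal{M}:J_i(t,x(t),\mu_0,\mu)\geq V_i^-(t,x(t),\mu_{-i})\ \forall t\in[0,T],\ i=\overline{1,n}\}$$
is non-empty and that $J_0$ attains its maximum on it; the maximizer then yields $\alpha^*$ via the punishment construction of the mixed analog of Lemma~\ref{lm_suff_diff}.

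For non-emptiness I would put $K_i(\mu)=\min_{\mu_0\in\mathcal{M}_0}J_i(\mu_0,\mu)$ and apply the Fan--Glicksberg fixed point theorem to the best-response correspondence of the $n$-player followers' game with payoffs $K_i$, thereby producing a Nash equilibrium $\mu^\natural$. Any pair $(\mu_0,\mu^\natural)$ will then lie in $\mathcal{C}_m$ provided one has the identity $K_i(\mu^\natural)=V_i^-(\mu^\natural_{-i})$ along the induced trajectory, which is a Sion-type minmax swap whose validity rests on the concavity discussed below. Compactness of $\mathcal{C}_m$, and hence attainment of $\max J_0$, follows from weak-$*$ compactness of $\mathcal{M}_0\times\mathcal{M}$, joint continuity of the trajectory and payoff maps, and lower semicontinuity of $V_i^-$.

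The main obstacle is the concavity of $\mu_i\mapsto J_i(\mu_0,\mu_i,\mu_{-i})$ for each fixed $(\mu_0,\mu_{-i})$: this single property delivers both the quasiconcavity of $K_i$ needed for Fan--Glicksberg (an infimum of concave functions is concave) and the minmax equality relating $K_i(\mu^\natural)$ to $V_i^-(\mu^\natural_{-i})$. This is precisely where the theorem's hypotheses enter. The decomposition $g_i=g_i^0+g_i^1+g_i^2$ isolates the $\mu_i$-dependence of $J_i$ into a linear-in-$\mu_i$ integral of $g_i^2$, a $\mu_i$-independent integral of $g_i^1$, and the composition of the concave-in-$x$ functional $\sigma_i(x(T))+\int_0^T\!\int g_i^0(t,x(t),u_{-i})\mu_{-i}(t,du_{-i})dt$ with the trajectory map $\mu_i\mapsto x(\cdot;\mu_0,\mu_i,\mu_{-i})$. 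Since the relaxed velocity is affine in $\mu_i$, combining this with the paper's closedness hypothesis on the reachable set of trajectories lets one push concavity from $x$ down to $\mu_i$ by a Young-measure/Lyapunov convexification. This transfer is the only genuinely non-routine step; everything else is a mechanical mixed-strategy translation of the static case.
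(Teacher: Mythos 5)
Your reduction of the problem to the non-emptiness of the set $\mathcal{C}$ (plus attainment of $\max J_0$ on it) via mixed-strategy versions of Lemmas~\ref{lm_ness} and \ref{lm_suff_diff} and Theorem~\ref{th_diff_stack} is in line with the paper. The gap is in your non-emptiness argument, which transplants the static construction of Theorem~2 essentially unchanged, and this transplant fails in the differential setting for two reasons. First, your $K_i(\mu)=\min_{\mu_0\in\mathcal{M}_0}J_i(\mu_0,\mu)$ is an \emph{open-loop} (program) minimum, so a Nash equilibrium $\mu^\natural$ for the payoffs $K_i$ only gives the bound $J_i(\mu_0,\mu^\natural)\geq\max_{\mu_i}\min_{\mu_0}J_i(\mu_0,\mu_i,\mu^\natural_{-i})$, whereas membership in $\mathcal{C}$ requires $J_i\geq V_i^-$ with $V_i^-$ defined through \emph{nonanticipative} strategies of the leader. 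The identity $K_i(\mu^\natural)=V_i^-(\mu^\natural_{-i})$ that you invoke is not a Sion-type minimax swap: it is an information-structure statement, and it is false in general. What the mixed-strategy (Isaacs) condition gives, and what the paper actually uses, is the equality $V_i^-=V_i^+=\max_{\beta_i}\min_{\mu_0}J_i(t_*,x_*,\mu_0,\beta_i[\mu_0],\mu_{-i})$ with the follower's $\beta_i$ nonanticipative, not with an open-loop $\mu_i$. Second, the definition of $\mathcal{C}$ demands the inequality $J_i(t,x(t),\mu_0,\mu)\geq V_i^-(t,x(t),\mu_{-i})$ at \emph{every} $t\in[0,T]$ along the trajectory generated by the pair, not only at $t=0$; in particular your claim that ``any pair $(\mu_0,\mu^\natural)$'' lies in $\mathcal{C}$ does not survive the passage from the static set $\mathcal{B}$ to $\mathcal{C}$, since for an arbitrary $\mu_0$ the inequality can break at intermediate positions.

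These two difficulties are precisely what the paper's proof is built to overcome, and that machinery is absent from your plan: one fixes $\mu_0$, uses the Bellman (dynamic programming) principle for $V_i^+$ on a partition $t_N^k=Tk/N$ to construct a stepwise optimal follower response $\tilde\mu_{i,N}$ whose induced trajectory makes $t\mapsto V_i^+(t,x_{i,N}(t),\mu_{-i})$ monotone up to an error $\delta_N$, and then passes to a limit $\tilde\mu_{i,N_r}\to\mu_i'$ to obtain the inequality at all times exactly; the resulting selection shows that the correspondence $\mathcal{G}$ (whose values are convex thanks to the concavity of $\sigma_i$ and $g_i^0$ in $x$ and the additive structure of $g_i$ --- not, as in your plan, via quasiconcavity of $K_i$) is nonempty-valued, and a Fan--Glicksberg/Kakutani fixed point of $\mathcal{G}$ is then a point of $\mathcal{C}$. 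So your use of a fixed-point theorem is in the right spirit, but it is applied to the wrong object: without the dynamic-programming, time-discretized construction of the follower's response (or an equivalent viability argument along the trajectory), the non-emptiness of $\mathcal{C}$ is not established, and the ``push concavity from $x$ down to $\mu_i$'' step you describe is not a substitute for it.
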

\begin{proof}
Let us prove that the set $\mathcal{C}$ is nonempty.

Since the players use mixed strategies the Isaacs condition holds for each $i=\overline{1,n}$ i.e. for all profile of measures $m_{-i}$ and any vector $s\in\mathbb{R}^d$ the following equality is valid
\begin{multline*}
\min_{m_0}\max_{m_i}\int_{P_0}\int_{P_i}\int_{P_{-i}}[\langle s,f(t,x,u_0,u_1,\ldots,u_n)\rangle+g_i(t,x,u_0,u_1,\ldots,u_n)]
\\m_{-i}(du_{-i})m_i(du_i)m_0(du_0)\\=
\max_{m_i}\min_{m_0}\int_{P_0}\int_{P_i}\int_{P_{-i}}[\langle s, f(t,x,u_0,u_1,\ldots,u_n)\rangle+g_i(t,x,u_0,u_1,\ldots,u_n)]\\
m_{-i}(du_{-i})m_i(du_i)m_0(du_0).\end{multline*}
Therefore
$$V^-(t_*,x_*,\mu_{-i})=V^+(t_*,x_*,\mu_{-i}) =\max_{\beta_i}\min_{\mu_0\in\mathcal{M}_0}J_i(t_*,x_*,\mu_0,\beta_i[\mu_0],\mu_{-i}). $$
Here $\beta_i$ denotes a mapping $\mathcal{M}_0\rightarrow\mathcal{M}_i$ satisfying feasibility property.

Define the multivalued map $\mathcal{G}:\mathcal{M}_0\times\mathcal{M}\multimap \mathcal{M}_0\times\mathcal{M}$ by the rule $(\mu_0',\mu')\in \mathcal{G}(\mu_0,\mu)$ if for each $i=\overline{1,n}$ $$J_i(t,x_i(t),\mu_0',\mu_i',\mu_{-i})\geq V^-_i(t,x_i(t),\mu_{-i}).$$ Here $x_i(\cdot)=x(\cdot,\mu_0',\mu_i',\mu_{-i})$.

Note that the set $\mathcal{G}(\mu_0,\mu)$ is convex for all $\mu_0\in\mathcal{M}_0$, $\mu\in\mathcal{M}$. Moreover, $\mathcal{G}$ has a closed graph. Let us prove the nonemptiness of $\mathcal{G}(\mu_0,\mu)$.

Put $\mu_0'=\mu_0$. From Bellman principle it follows that
\begin{multline}\label{bellman}
V^+_i(t_*,x_*,\mu_{-i})  =\max_{\beta_i}\min_{\mu_0\in\mathcal{M}_0}\Bigl[V(t_+,x(t_+,t_*,x_*,\mu_0,\beta_i[\mu_{-i}],\mu_{-i})) \\+\int_{t_*}^{t^+}\int_{P_0}\int_{P_i}\int_{P_{-i}} g_i(t,x(t_+,t_*,x_*,\mu_0,\beta_i(\mu_{-i}),\mu_{-i})),u_0,u_{-i},u_{-i})\\\mu_{-i}(t,du_{-i})\beta_{i}[\mu_0](t,du_{i})\mu_0(t,du_0)dt\Bigr].
\end{multline}
Let $N$ be a natural number. Put $t^k_N=Tk/N$. Let $\beta_{i,N}^k$ maximize the right-hand side at (\ref{bellman}) for $t_*=t_N^k$, $t_+=t_N^{k+1}$, $x_*=\xi^{k-1}_{i,N}$. Here $\xi_{i,N}^k$ is defined inductively by the rule $$\xi_{i,N}^0=x_0,\ \ \xi_{i,N}^k=x(t_{i,N}^{k},t_{i,N}^{k-1},\xi_{i,N}^{k-1},\mu_0,\beta_{i,N}^{k-1}[\mu_0],\mu_{-i}).$$

Put $\tilde{\mu}_{i,N}(t,\cdot)=\beta_{i,N}^k[\mu_0](t,\cdot)$ for $t\in [t_{N}^{k-1},t_N^{k})$. Denote $x_{i,N}(\cdot)=x(\cdot,t_0,x_0,\mu_0,\tilde{\mu}_{i,N},\mu_{-i})$. Note that $\xi_{i,N}^k=x_{i,N}(t_{N}^k)$.
We have for $k<l$ the inequality
\begin{multline*}
V^+_i(t_N^k,x_{i,N}(t_N^k),\mu_{-i})\leq V^+_i(t_N^l,x_{i,N}(t_N^l),\mu_{-i})\\+\int_{t_N^k}^{t_N^l}\int_{P_0}\int_{P_i}\int_{P_{-i}}
g_i(t,x_{i,N}(t),u_0,u_i,u_{-i})\mu_{-i}(t,du_{-i})\mu_i(t,du_i)\mu_0(t,du_0)dt
\end{multline*}
Note that $V^+_i(t_N^N,\xi_{i,N}^N,\mu_{-i})=\sigma_i(\xi_{i,N}^N)$.

Using the continuity of the function $V_i^+$ we get that
\begin{multline}\label{main_ineq_N}
V^+_i(t_*,x_{i,N}(t_*),\mu_{-i})\leq V^+_i(T,x_{i,N}(T),\mu_{-i})\\+\int_{t_*}^{T}\int_{P_0}\int_{P_i}\int_{P_{-i}}
g_i(t,x_{i,N}(t),u_0,u_i,u_{-i})\mu_{-i}(t,du_{-i})\mu_i(t,du_i)\mu_0(t,du_0)dt+\delta_N.
\end{multline} Here $\delta_N\rightarrow 0$, as $N\rightarrow\infty$.

There exists a sequence $\{\tilde{\mu}_{i,N_r}\}$ converging to some $\mu'_i\in\mathcal{M}_i$, as $r\rightarrow\infty$. Therefore
$x_{i,N_r}(\cdot)=x(\cdot,t_0,x_0,\mu_0,\tilde{\mu}_{i,N_r},\mu_{-i})$  tends to $x_i(\cdot)=x(\cdot,t_0,x_0,\mu_0,{\mu}_{i}',\mu_{-i})$. This and inequality (\ref{main_ineq_N}) yield the inequality
\begin{multline*}%\label{main_ineq_N}
V^+_i(t_*,x_{i}(t_*),\mu_{-i})\leq V^+_i(T,x_{i}(T),\mu_{-i})\\+\int_{t_*}^{T}\int_{P_0}\int_{P_i}\int_{P_{-i}}
g_i(t,x_{i}(t),u_0,u_i,u_{-i})\mu_{-i}(t,du_{-i})\mu_i(t,du_i)\mu_0(t,du_0)dt.
\end{multline*}

Consider the profile of followers' strategies $\mu'=(\mu'_1,\ldots,\mu'_n)$. We have that $(\mu_0,\mu')\in\mathcal{G}(\mu_0,\mu)$. %Further the multivalued function $\mathcal{G}$ is upper semicontinuous.

Since $\mathcal{M}_0\times\mathcal{M}$ is compact, and $\mathcal{G}$  is an upper semicontinuous multivalued map with nonempty convex compact values, we get that $\mathcal{G}$ admits the fixed point $(\mu_0^*,\mu^*)$. Obviously, it belongs to $\mathcal{C}$. The consequence of the Theorem follows from this and  Theorem \ref{th_diff_stack}.
\end{proof}

\section*{Acknowledgments}

The work was supported by RFBR (project N~12-01-00537), and Presidium of RAS (projects 12-P-1-1002, 12-P-1-1012).

\end{document}